\numberwithin{equation}{section}
\theoremstyle{plain}
\newtheorem{Th}{Theorem}[section]
\newtheorem{Lemma}[Th]{Lemma}
 \theoremstyle{definition}
\newtheorem{?}[Th]{Problem}
\newcommand{\kong}[2]{ \equiv #1 \pmod{#2}}
\newcommand{\eq}[1]{\eqref{#1}}
\begin{document}

\title{Exact additive complements}

\author{Imre Z. Ruzsa}
\address{Alfr\'ed R\'enyi Institute of Mathematics\\
     Budapest, Pf. 127\\
     H-1364 Hungary
}
\email{ruzsa@renyi.hu}
 \thanks{Author was supported by ERC--AdG Grant No.321104  and
 Hungarian National Foundation for Scientific
 Research (OTKA), Grants No.109789 , % eny\'em
 and NK104183.}% Pintz            }

 \subjclass{11B13, 11B34}
     \begin{abstract}
     Let $A,B$ be sets of positive integers such that $A+B$ contains all but finitely many positive integers.
S\'ark\"ozy and Szemer\'edi proved that if $ A(x)B(x)/x \to1$,
then $A(x)B(x)-x \to \infty$. 
Chen and Fang considerably improved S\'ark\"ozy and Szemer\'edi's bound.
We further improve their estimate and show by an example that our result is nearly best possible.

     \end{abstract}

     \maketitle

     \section{Introduction}

Two sets $A, B$  of positive integers are called \emph{additive complements} if their sumset $A+B$ 
contains all but finitely many positive integers. The counting functions of additive complements clearly satisfy
\begin{equation}\label{trivi}
A(x) B(x) \geq x-r, \end{equation}
where $r$ is the number of positive integers not represented as a sum. It is easy to construct sets, separating odd and even
places in a digital representation, for which equality holds for infinitely many values of $x$. These sets have the property that
 \[ \limsup A(x) B(x)/x > 1 . \]
Hanani asked whether this is always the case for infinite additive complements. This was answered by Danzer\cite{danzer64},
who first constructed infinite additive complements such that 
\begin{equation}\label{hyp} A(x)B(x)/x \to1. \end{equation}
We shall call such additive complements \emph{exact}. This property is less exotic than it seems; powers of a fixed integer
do have an exact complement, as do all sufficiently thin sets \cite{r96a, r01a}. 

Narkiewicz\cite{narkiewicz59} proved an important property of exact complements. He considered a wider class.

\begin{Th}[Narkiewicz's dichotomy]\label{nark}
Let $A,B$ be infitite sets of positive integers such that the number $r(x)$ of integers up to $x$ not contained in
their sumset $A+B$ satisfies $r(x)=o(x)$. Under condition \eq{hyp} we have
\begin{equation}\label{smallbig}
A(2x)/A(x) \to 1, \ B(2x)/B(x)\to 2, \end{equation}
or this holds with the roles of $A,B$ exchanged. If \eq{smallbig} holds, then for $\varepsilon>0$ and $x>x_0(\varepsilon)$ we have
\begin{equation}\label{smallbig2}
  A(x) < x^\varepsilon, \ B(x) > x^{1-\varepsilon} . \end{equation}
\end{Th}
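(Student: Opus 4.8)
The plan is to compare the representation‑counting functions at scales $x$ and $2x$. For $y>0$ I would let $N(y)$ denote the number of pairs $(a,b)\in A\times B$ with $a+b\le y$. Each integer $n\le y$ lying in $A+B$ gives at least one such pair, so $N(y)\ge y-1-r(y)=y-o(y)$; since also $N(y)\le A(y)B(y)$ and, combining this lower bound with \eq{hyp}, $A(y)B(y)=y+o(y)$, one gets $N(y)=y+o(y)$. Because a pair counted by $N(x)$ automatically has $a\le x$ and $b\le x$, the number $P$ of pairs $(a,b)\in A\times B$ with $a\le x$, $b\le x$, $a+b>x$ equals $A(x)B(x)-N(x)=o(x)$.

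Next I would split the pairs $(a,b)\in A\times B$ with $x<a+b\le 2x$ into three disjoint families: those with $a\le x$ and $b\le x$ (at most $P=o(x)$ of them); those with $a>x$, which forces $a\le 2x$ and $b<x$ (at most $(A(2x)-A(x))B(x)$); and, symmetrically, those with $b>x$ (at most $A(x)(B(2x)-B(x))$). Their total is $N(2x)-N(x)=x+o(x)$, so
\[ A(2x)B(x)+A(x)B(2x)\ \ge\ 2A(x)B(x)+x-o(x)\ =\ 3x-o(x). \]
Setting $u=A(2x)/A(x)\ge1$, $v=B(2x)/B(x)\ge1$ and dividing by $A(x)B(x)=x+o(x)$, this reads $u+v\ge 3-o(1)$, whereas $uv=A(2x)B(2x)/(A(x)B(x))\to2$. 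Since $uv\ge (u+v)-1$ for $u,v\ge1$, the two facts force $u+v\to3$ and $uv\to2$, hence $\{u,v\}\to\{1,2\}$: for every large $x$, $A(2x)/A(x)$ is within $o(1)$ of $1$ or within $o(1)$ of $2$.

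The delicate step is to upgrade this pointwise alternative to a single global one. Here I would use that $A$ is infinite, so $A(x)\to\infty$, and that only one integer is gained from $x$ to $x+1$, giving $A(x)\le A(x+1)\le A(x)+1$ and $A(2x)\le A(2x+2)\le A(2x)+2$; hence $A(2x)/A(x)$ changes by a factor $1+O(1/A(x))=1+o(1)$ when $x$ increases by one. By the previous paragraph the ratio avoids, for all large $x$, a fixed gap such as $(1.4,1.6)$, and a quantity moving in steps of relative size $o(1)$ that omits this gap cannot switch sides; so for all large $x$ either $A(2x)/A(x)<1.5$, in which case $A(2x)/A(x)\to1$ and (from $uv\to2$) $B(2x)/B(x)\to2$, i.e.\ \eq{smallbig}, or $A(2x)/A(x)>1.5$, which gives \eq{smallbig} with $A$ and $B$ interchanged.

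Finally, assuming \eq{smallbig}, I would deduce \eq{smallbig2} by a doubling argument: given $\varepsilon>0$, choose $\delta>0$ with $\log_2(1+\delta)<\varepsilon/2$ and $X$ with $A(2x)<(1+\delta)A(x)$ for $x\ge X$; iterating over powers of $2$ yields $A(x)\le C_\varepsilon x^{\varepsilon/2}$ for $x\ge X$, hence $A(x)<x^\varepsilon$ for $x>x_0(\varepsilon)$, and then $B(x)\ge(x-1-r(x))/A(x)\ge\frac12 x^{1-\varepsilon/2}>x^{1-\varepsilon}$ for $x$ large (using the $A$‑bound with $\varepsilon/2$ in place of $\varepsilon$ and $r(x)=o(x)$). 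I expect the passage from the pointwise to the global dichotomy to be the main obstacle; the rest is bookkeeping around the displayed inequality.
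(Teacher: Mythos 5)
Your proof is correct. Note that the paper itself offers no proof of this theorem --- it is quoted from Narkiewicz's 1959 paper with a citation only --- so there is nothing in the text to compare against; your argument stands as a self-contained verification. The chain of estimates is sound: $N(y)=y+o(y)$ follows from $r(y)=o(y)$ and \eqref{hyp}; the three-way partition of the pairs with $x<a+b\le 2x$ is genuinely disjoint and exhaustive (both coordinates cannot exceed $x$), giving $u+v\ge 3-o(1)$; and combining this with $uv\to 2$ and the elementary inequality $uv\ge u+v-1$ for $u,v\ge 1$ does pin $\{u,v\}$ near $\{1,2\}$, since the roots of $T^2-sT+p$ depend continuously on $(s,p)$ near $(3,2)$ where they are distinct. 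You correctly identified the one delicate point --- passing from the pointwise alternative to a single global dichotomy --- and your resolution works: $A(x)\to\infty$ because $A$ is infinite, so the ratio $A(2x)/A(x)$ moves in relative steps of size $1+O(1/A(x))=1+o(1)$, and a quantity confined to $o(1)$-neighbourhoods of $1$ and $2$ that moves in such steps cannot cross the fixed multiplicative gap between, say, $1.4$ and $1.6$ once $x$ is large. The deduction of \eqref{smallbig2} by iterating $A(2x)<(1+\delta)A(x)$ along dyadic scales and then using $B(x)\ge (x-1-r(x))/A(x)$ is routine and correct.
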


 This shows that
polynomial sequences do not have an exact complement. The set of primes does not have either, for less obvious reasons \cite{r98f}.

For the sequel we will assume that \eq{smallbig} holds, that is, $A$ is small and
$B$ is large.

For exact complements S\'ark\"ozy and Szemer\'edi\cite{ssz94} proved that if \eq{hyp} holds,
then $A(x)B(x)-x \to \infty$. (While this paper actually appeared in 1994, the result was already announced in the 1966
edition of Halberstam and Roth's book Sequences\cite{sequences}.) They remark that their proof shows that
 \[ A(x)B(x)-x = o\big(A(x)\big) \]
cannot hold, and they conjecture that
 \[ A(x)B(x)-x = O\big(A(x)\big) \]
may be possible.
 
Chen and Fang\cite{chenfang} disproved this conjecture and considerably improved S\'ark\"ozy and Szemer\'edi's bound.
Their result shows that even
\begin{equation}\label{hatvany}
A(x)B(x)-x = O\big(A(x)^c\big) \end{equation}
cannot hold for any constant $c$.

The aim of this paper is to improve Chen and Fang's result and to show by means of an example that there
is precious little room for further improvement.

Write
 \[ a^*(x) = \max \{ a\in A, a \leq x .\}\]

\begin{Th}\label{estimate}
  Let $A, B$ be infinite  sets of positive integers such that the number $r(x)$ of integers up to $x$ not contained in
their sumset $A+B$ satisfies $r(x)=o(x)$. Suppose they satisfy \eq{hyp} and the notation corresponds to
\eq{smallbig}.  If $r(x)= o\bigl( a^*(x) \bigr)  $, then             we have
\begin{equation}\label{also}
    A(x) B(x) -x > \bigl(1-o(1)\bigr) \frac{a^*(x)}{A(x)} . \end{equation}
\end{Th}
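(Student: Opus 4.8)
The plan: write $a=a^{*}(x)$, $k=A(x)$, and let $a_{1}<a_{2}<\dots<a_{k}=a$ be the elements of $A$ in $[1,x]$.

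\emph{Step 1 (a counting identity).} Split the $A(x)B(x)$ pairs $(a_{i},b)$ with $a_{i}\le x$, $b\le x$, according as $a_{i}+b\le x$ or $a_{i}+b>x$, and call the two counts $Q$ and $P$, so that $P=\sum_{i=1}^{k}\bigl(B(x)-B(x-a_{i})\bigr)$. Since $a_{i}+b\le x$ forces $a_{i},b<x$, the number $Q$ is exactly the number of representations $n=a_{i}+b$ of integers $n\le x$; hence $Q=\bigl(x-r(x)\bigr)+R$, where $R\ge 0$ counts the surplus representations (those beyond one per represented integer). Therefore $A(x)B(x)-x=R+P-r(x)$, and it suffices to prove $R+P\ge r(x)+\bigl(1-o(1)\bigr)a/k$.

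\emph{Step 2 ($B$ is not too thin below $x$).} Of the $a$ integers in $(x-a,x]$, all but at most $r(x)$ lie in $A+B$; choose for each of those a representation $n=a_{i}+b$. Then $a_{i}\le a$ (as $a=a^{*}(x)$) and $b=n-a_{i}\in(x-2a,x]$, so $b\in B\cap(x-2a,x]$. A fixed $b$ can arise this way for at most $\#\bigl(A\cap(x-a-b,x-b]\bigr)$ integers $n$, and this is the number of elements of $A$ in an interval of length $a$ contained in $(-a,2a)$, hence $\le(1+o(1))A(x)$ by Narkiewicz's dichotomy $A(2y)/A(y)\to1$ (Theorem~\ref{nark}). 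Comparing counts, $\#\bigl(B\cap(x-2a,x]\bigr)\ge\bigl(a-r(x)\bigr)/\bigl((1+o(1))k\bigr)=\bigl(1-o(1)\bigr)a/k$, using $r(x)=o(a)$.

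\emph{Step 3 (the crux).} It remains to pass from Step 2 to $R+P\ge r(x)+(1-o(1))a/k$. One has $P\ge\#\bigl(B\cap(x-a,x]\bigr)$ for free, since $a_{k}+b>x$ whenever $b\in B\cap(x-a,x]$; but Step 2 controls only the wider window $(x-2a,x]$, and $B\cap(x-a,x]$ may be far smaller, so the surplus term $R$ must be brought in. The natural route is a quantitative version of Step 2: for $b\in B\cap(x-a,x]$ the pairs $(a_{i},b)$ split into $A(x-b)$ with sum $\le x$ and $k-A(x-b)$ with sum $>x$, a split totalling $k$; for $b\in B\cap(x-2a,x-a]$ every sum $a_{i}+b$ is $\le x$. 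Summing over $b$ and comparing the number of representations landing in $(x-a,x]$ with the $\ge a-r(x)$ integers actually covered there yields a lower bound for $R$. The difficulty I foresee is that this accounting, built on the test interval $(x-a,x]$, carries an unavoidable loss of order $a$ — essentially the length of that interval — which dwarfs the target $a/k$; to recover the missing $(1-o(1))a/k$ one will need to work instead with a family of shorter intervals just below $x$, and to exploit the fine‑scale regularity of $B$ near $x$, which $B(2y)/B(y)\to2$ furnishes once it is sharpened, using $A(x)B(x)\sim x$, to $B(x-a_{i})=(1+o(1))B(x)$ for every $i\le k$. Making the bookkeeping efficient enough that the net loss is $o(a/k)$ rather than $o(a)$ — so that, in effect, the $r(x)$ uncovered integers are charged with about $r(x)$ surplus representations — is, I expect, the main obstacle.
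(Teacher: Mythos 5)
Your Step 1 is precisely the paper's own starting decomposition (there $A(x)B(x)-x=y+z-r$, with $y$ the surplus of representations and $z$ the number of sums exceeding $x$, and $y+z=R+P$ in your notation), and your Step 2 is a correct but ultimately unused observation. The problem is Step 3: it is not an argument but a description of an obstacle, so the proof is incomplete, and the obstacle you identify is real — any accounting based on covering a window of length $\asymp a$ near $x$ loses a term of order $a$, which swamps the target $a/k$. The missing idea is not finer window bookkeeping but a change of variable from sums to differences. The paper proves an elementary second-moment lemma (Lemma \ref{deltaszigma}): for finite sets $U,V$ with representation functions $\sigma$ of $U+V$ and $\delta$ of $V-U$, one has $\sum\sigma(n)=\sum\delta(n)$ and $\sum\sigma(n)^2=\sum\delta(n)^2$ (both count quadruples with $u+v=u'+v'$), and since $\sigma(n)\le|U|$ always,
\[ \sum_{\sigma(n)>1}\bigl(\sigma(n)-1\bigr)\ \ge\ \frac{1}{|U|}\sum_{\delta(n)>1}\bigl(\delta(n)-1\bigr). \]
This converts a surplus of \emph{collisions among differences} into a lower bound for your $R$ after division by $|U|=A(x)$ — which is exactly where a bound of size $a/k$ rather than $a$ comes from.

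Concretely (with $t=a^*(x)$; the case $t\ge x/2$ is disposed of directly by $P\ge B(x)-B(x-t)\sim t/A(x)$ using $B(cx)\sim cB(x)$ and \eqref{hyp}): take $U=A\cap[1,x]\subset[1,t]$ and $V'=B\cap[1,x-t]$, so $U+V'\subset[2,x]$ and $V'-U\subset[1-t,x-t-1]$. Using \eqref{hyp} and the averaging fact $\sum_{a\in U}a=o\bigl(xA(x)\bigr)$ one checks that at most $(1+\varepsilon)t$ of the $A(x)B(x-t)$ differences land in $[1-t,t]$; the remaining pairs are crowded into fewer than $x-2t$ integers, forcing a difference-surplus of at least $A(x)B(x-t)-x+(1-\varepsilon)t$. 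The lemma then gives $R\ge B(x-t)-\bigl(x-(1-\varepsilon)t\bigr)/A(x)$, and adding $P\ge B(x)-B(x-t)$ the terms $B(x-t)$ cancel; writing $B(x)=\bigl((A(x)B(x)-x)+x\bigr)/A(x)$ and rearranging yields \eqref{also} once $r(x)=o(t)$ is invoked. Without this sum--difference transfer (or an equivalent device), I do not see how your plan of charging the $r(x)$ uncovered integers against surplus representations in short intervals below $x$ can be made to close.
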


The reason that this excludes \eq{hatvany} is that Narkiewicz's dichotomy \eq{smallbig2}   implies that
 \[ A(x) = A\bigl(a^*(x)\bigr) < a^*(x)^\varepsilon \]
hence $a^*(x)$ is larger than any power of $A(x)$. Chen and Fang's result, though stated in quite different terms,
is equivalent to the lower bound
 \[ \frac{2}{3} \sqrt{a^*(x)} .\]
The proof of Theorem \ref{estimate} is based on their argument, with some parts improved.

Clearly the bound in \eq{also} cannot be improved to $a^*(x)$, since for  $x\in A$  we have  $a^*(x)=x$, and this would contradict
\eq{hyp}. However, it is possible that such an improvement holds whenever  $a^*(x)$ is small compared to $x$.
It is also a natural question, also formulated by Chen and Fang, whether one can give an \emph{absolute} lower bound,
say $ A(x) B(x) -x> \log x$. We show this is not the case.

\begin{Th}\label{pelda}
  Let $\omega$ be a function tending to infinity arbitrarily slowly. There are additive complements satisfying \eq{hyp} such that
for infinitely many values of $x$ we have
\begin{equation}\label{felso}
 A(x) B(x) - x < \min \bigl(\omega(x), c a^*(x) \bigr) \end{equation}
with some constant $c$.
\end{Th}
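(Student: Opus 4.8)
\smallskip
\noindent\textbf{Plan of proof of Theorem~\ref{pelda}.}
Fix the function $\omega$. The plan is to construct an exact complement $(A,B)$ together with an infinite sequence of ``milestones'' $x_1<x_2<\cdots$ at which the lower bound of Theorem~\ref{estimate} is essentially attained and at which, in addition, $a^*(x_k)$ is so small compared with $x_k$ that the right-hand side of \eqref{also} drops below $\omega(x_k)$. Each $x_k$ will lie deep inside an enormous gap of $A$, so that $A(x_k)=A\bigl(a^*(x_k)\bigr)$ and $a^*(x_k)/A(x_k)$ is a number completely determined by the earlier, already finished part of the construction; since $\omega\to\infty$ while $A(x)=x^{o(1)}$ by Narkiewicz's dichotomy, placing the milestone far enough out then makes $a^*(x_k)/A(x_k)<\omega(x_k)$, and also $a^*(x_k)/A(x_k)<a^*(x_k)$ once $A(x_k)\ge2$.

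\smallskip
I would build $A$ and $B$ in stages. At the end of stage $k$ there is a ``block'' of length $M_k$: traces $A\cap[0,M_k)$ and $B\cap[0,M_k)$ with $A+B\supseteq[0,M_k)$, with $A$ extremely thin inside $[0,M_k)$, and with the sumset-counting defect $A(M_k)B(M_k)-M_k$ equal to a slowly growing $e_k$. To pass to stage $k+1$ one picks $M_{k+1}\gg M_k$ and extends $A,B$ to $[0,M_{k+1})$. It is essential not to do this by the naive periodic rule (extend $A$ by translated copies of the block, $B$ by the scaled-up analogue): that is precisely the elementary odd/even--digit construction, for which $\limsup A(x)B(x)/x>1$, the overshoot already reaching a positive proportion of $x$ just past a ``round'' point. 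Instead the huge interval $(M_k,M_{k+1})$ is filled by a self-similar multiscale pattern: a very thin set of large new elements is added to $A$ at a geometric sequence of intermediate scales, together with a correspondingly thin ``correction'' layer in $B$, chosen so that (i)~$A+B$ covers all of $(M_k,M_{k+1})$, and (ii)~the overshoot $A(x)B(x)-x$ is kept of order $a^*(x)/A(x)=o(x)$ for \emph{every} $x$ in the stage. The milestone $x_k$ is taken in the first and longest sub-gap of this pattern, just below the first new $A$-element: there $a^*(x_k)$ is still the previous block top, $A(x_k)=A\bigl(a^*(x_k)\bigr)$, and the defect is $(1+o(1))\,a^*(x_k)/A(x_k)$, the least it can be by Theorem~\ref{estimate}.

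\smallskip
Granting the construction, the conclusion is bookkeeping. Since every $[0,M_k)$ is covered, $r(x)=O(1)=o\bigl(a^*(x)\bigr)$; the relative weight of all corrections, made $o(1)$ stage by stage, gives \eqref{hyp}; and at $x=x_k$,
\[
A(x_k)B(x_k)-x_k=(1+o(1))\,\frac{a^*(x_k)}{A(x_k)}<\min\bigl(\omega(x_k),\,c\,a^*(x_k)\bigr),
\]
with $c=1$ say, the two comparisons being exactly those arranged in the first paragraph. Letting $k\to\infty$ gives \eqref{felso} for infinitely many $x$.

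\smallskip
The one genuinely hard step is (ii): designing the fill of each gap so that it both covers the gap and holds the overshoot to order $a^*(x)/A(x)$ at \emph{every} point, not merely at the block endpoints. That the overshoot cannot be made smaller than a growing function is the S\'ark\"ozy--Szemer\'edi--Chen--Fang obstruction; the point here is to turn it around and show that a carefully tuned thin multiscale correction realizes exactly the forced growth and no more, uniformly across the enormous gaps. Everything else---the inductive set-up, the choice of the $M_k$ in terms of $\omega$, and the final inequality---is routine once this uniform overshoot bound is available.
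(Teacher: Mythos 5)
You have reproduced the paper's architecture at the highest level --- a stage-by-stage construction with milestones $x_k$ placed deep inside enormous gaps of $A$, so that $a^*(x_k)$ and $A(x_k)$ are quantities frozen by the already finished earlier stages and $\omega(x_k)$ eventually overtakes them --- but the proposal has a genuine gap: the construction itself is entirely deferred. You say yourself that ``the one genuinely hard step'' is designing the fill of each gap so that $A+B$ covers it while the overshoot stays small, and you offer no mechanism for doing this; that mechanism \emph{is} the proof. In the paper it is explicit: take primes $p_k$ with $k^3<p_k<(k+1)^3$, build $A$ as a union of blocks $A_k\subset(u_k,2u_k)$ of size $p_k-p_{k-1}$, chosen by a sieve argument (which needs its own lemma and the rapid growth of the $u_k$) so that $A_1\cup\cdots\cup A_k$ is a complete residue system modulo $p_k$ and stays reduced modulo every later $p_j$; take $B_k$ to be the multiples of $p_k$ in $(ku_k,(k+3)u_{k+1})$. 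Covering of each gap then follows from the complete-residue-system property, and exactness \eq{hyp} from $B(x)\le x/p_k+O(u_k/k^3)$ together with $A(x)\le p_{k+1}$. None of this is routine, and your stronger demand that the overshoot be controlled at \emph{every} point of each stage is more than the theorem requires (only $o(x)$ everywhere plus the sharp bound at the milestones is needed).

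There is also a substantive overclaim: you assert that at the milestones the defect equals $(1+o(1))\,a^*(x_k)/A(x_k)$, i.e.\ that the lower bound of Theorem \ref{estimate} is attained. The paper does not achieve this, and its concluding remark states that the author sees no way to do so: the unavoidable overlap of consecutive blocks $B_k$, $B_{k+1}$ (multiples of $p_k$ must persist past the first elements of $A_{k+1}$) forces a defect of order $a^*(x_k)$ at $x_k=u_{k+1}$, a factor of roughly $A(x_k)\approx p_k$ above the Theorem \ref{estimate} bound. Fortunately Theorem \ref{pelda} only requires $A(x)B(x)-x<\min\bigl(\omega(x),\,c\,a^*(x)\bigr)$, so the weaker bound $O\bigl(a^*(x_k)\bigr)$ suffices, with $\omega(u_{k+1})>c\,u_k$ secured by letting $u_{k+1}$ grow fast enough; but your claimed optimality (and hence your ``$c=1$'') is unsupported and, as far as is known, unattainable. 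As written, the proposal identifies the right shape of the argument but does not prove the theorem.
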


\section{The lower estimate}

\begin{Lemma} \label{deltaszigma}
  Let $U,V$ be finite sets of integers. Put
   \[ \sigma(n) = \#\{ (u, v): u\in U, v\in V, u+v=n \}, \ \delta(n) = \#\{ (u, v): u\in U, v\in V, v-u=n \} . \]
   We have
    \[ \sum_{\sigma(n)>1} (\sigma(n)-1) \geq \frac{1}{|U|}  \sum_{\delta(n)>1} (\delta(n)-1)  . \]
\end{Lemma}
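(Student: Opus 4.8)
The plan is to pass from the linear quantities $\sigma(n)-1,\ \delta(n)-1$ to the quadratic ones $\binom{\sigma(n)}{2},\ \binom{\delta(n)}{2}$, prove that the two quadratic sums are \emph{exactly equal}, and then sandwich.

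\emph{Step 1: the identity $\sum_n\binom{\sigma(n)}{2}=\sum_n\binom{\delta(n)}{2}$.} Here $\sum_n\binom{\sigma(n)}{2}$ counts the two-element subsets $\{(u_1,v_1),(u_2,v_2)\}$ of $U\times V$ with $u_1+v_1=u_2+v_2$, while $\sum_n\binom{\delta(n)}{2}$ counts those with $v_1-u_1=v_2-u_2$. Consider the operation "swap the second coordinates of the two points'', $\{(u_1,v_1),(u_2,v_2)\}\mapsto\{(u_1,v_2),(u_2,v_1)\}$. It is an involution on the set of all two-element subsets of $U\times V$ (if $(u_1,v_1)\neq(u_2,v_2)$ then $(u_1,v_2)\neq(u_2,v_1)$, since equality would force $u_1=u_2,\ v_1=v_2$; and applying it twice is the identity). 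Moreover, the two points of the image $\{(u_1,v_2),(u_2,v_1)\}$ have a common value of $v-u$ exactly when $v_2-u_1=v_1-u_2$, i.e.\ exactly when $u_1+v_1=u_2+v_2$. Hence this involution carries the sum‑coincidence family bijectively onto the difference‑coincidence family, which proves the identity. (Equivalently, both sums equal $\frac12\sum_{t\neq 0} r_{U-U}(t)\,r_{V-V}(t)$, where $r_{S-S}(t)=\#\{(s,s')\in S\times S:\ s-s'=t\}$.)

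\emph{Step 2: two elementary pointwise bounds.} For every $n$ with $\delta(n)\geq 1$ we have $\binom{\delta(n)}{2}=\tfrac{\delta(n)}{2}\bigl(\delta(n)-1\bigr)\geq \tfrac12\bigl(\delta(n)-1\bigr)$, so $\sum_n\binom{\delta(n)}{2}\geq \tfrac12\sum_{\delta(n)>1}\bigl(\delta(n)-1\bigr)$. In the other direction, $\sigma(n)=\#\{u\in U:\ n-u\in V\}\leq |U|$, so for $n$ with $\sigma(n)\geq 1$ we get $\binom{\sigma(n)}{2}=\tfrac{\sigma(n)}{2}\bigl(\sigma(n)-1\bigr)\leq \tfrac{|U|}{2}\bigl(\sigma(n)-1\bigr)$, whence $\sum_n\binom{\sigma(n)}{2}\leq \tfrac{|U|}{2}\sum_{\sigma(n)>1}\bigl(\sigma(n)-1\bigr)$. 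In both estimates the terms with $\sigma(n)$ or $\delta(n)$ equal to $0$ or $1$ contribute nothing on either side, which is the only point needing a moment's care.

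\emph{Step 3: conclusion.} Chaining the inequalities of Step 2 through the identity of Step 1 gives
$\tfrac12\sum_{\delta(n)>1}(\delta(n)-1)\leq\sum_n\binom{\delta(n)}{2}=\sum_n\binom{\sigma(n)}{2}\leq\tfrac{|U|}{2}\sum_{\sigma(n)>1}(\sigma(n)-1)$,
and multiplying by $2/|U|$ yields the asserted bound. The whole argument hinges on Step 1; Steps 2 and 3 are routine. I do not expect a genuine obstacle — the main thing to get right is that the coordinate‑swap really is a bijection between the two coincidence families, which becomes transparent once $u_1+v_1=u_2+v_2$ is rewritten as $v_2-u_1=v_1-u_2$.
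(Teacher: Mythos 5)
Your proof is correct and is essentially the paper's argument: the identity $\sum_n\binom{\sigma(n)}{2}=\sum_n\binom{\delta(n)}{2}$ is the same double count of quadruples with $u+v=u'+v'$ (rewritten as $v-u'=v'-u$) that the paper uses in the form $\sum\sigma(n)^2=\sum\delta(n)^2$, $\sum\sigma(n)=\sum\delta(n)$, and your pointwise bounds via $\delta(n)\ge 2$ and $\sigma(n)\le|U|$ match the paper's. The only cosmetic difference is your phrasing of the double count as an explicit coordinate-swapping involution.
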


\begin{proof}
  We have
   \[ \sum \sigma(n) = \sum \delta(n) = |U| |V|,\]
\[ \sum \sigma(n)^2 = \sum \delta(n)^2 \]
by double-counting the quadruples satisfying $u+v=u'+v'$, which can be rearranged as $v-u'=v'-u$, and $\sigma(n)\leq |U|$ for all $n$.
Hence
 \[ \sum_{\delta(n)>1} (\delta(n)-1) \leq   \sum \bigl(\delta(n)^2 -\delta(n) \bigr) =  \sum \bigl(\sigma(n)^2 -\sigma(n) \bigr) \leq |U|  \sum_{\sigma(n)>1} (\sigma(n)-1).  \]
\end{proof}
This estimate can be doubled, as $ \delta(n)-1 \leq \bigl(\delta(n)^2 -\delta(n) \bigr)/2$ whenever ${\delta(n)>1}$, but we cannot utilize this improvement.

There are sets $U,V$ for which this estimate is correct up to a constant factor. It is likely that the sets for which we shall apply
this lemma are not of this kind, but I do not see any way to show this.

\begin{Lemma} \label{aatlag}
  Assume that the sets $A,B$ satisfy \eq{hyp} and \eq{smallbig}. Then
\begin{equation}\label{smallbiga}
A(cx)/A(x) \to 1 \end{equation}
uniformly in any range $c_1<c<c_2$ with $0<c_1<c_2$;
\begin{equation}\label{smallbigb}
\ B(cx)/B(x)\to c \end{equation}
uniformly in any range $c<c_2$ with $0<c_2$.
 Furthermore  
   \begin{equation}\label{aa}
  \sum_{a\in A, a\leq x} a = o(xA(x)) .  \end{equation}
\end{Lemma}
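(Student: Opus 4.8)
The plan is to derive everything from Narkiewicz's dichotomy, specifically from the bound \eq{smallbig2} which says $A(x) < x^\varepsilon$ and $B(x) > x^{1-\varepsilon}$ for large $x$. These asymptotics should propagate to the ``multiplicative perturbation'' statements \eq{smallbiga} and \eq{smallbigb} by a routine monotonicity argument, and then \eq{aa} will follow by a summation-by-parts estimate on $A$.

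First I would establish \eq{smallbigb}. Since $B$ are additive complements with $r(x)=o(x)$, the trivial lower bound \eq{trivi} combined with \eq{hyp} gives $A(x)B(x) = (1+o(1))x$; on the other hand by \eq{smallbig2}, $A(x) = x^{o(1)}$, so $B(x) = x^{1+o(1)}$, and more precisely $B(x) \sim x / A(x)$. For a fixed $c \le c_2$, write $B(cx)/B(x) = \bigl(cx/A(cx)\bigr)\big/\bigl(x/A(x)\bigr)\cdot(1+o(1)) = c\cdot\bigl(A(x)/A(cx)\bigr)(1+o(1))$, so \eq{smallbigb} reduces to \eq{smallbiga}. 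For \eq{smallbiga}, monotonicity of $A$ gives, for $c\ge 1$, $1 \le A(cx)/A(x) \le A(c_2 x)/A(x)$, so it suffices to bound $A(c_2 x)/A(x)$; iterating the doubling relation \eq{smallbig} in Narkiewicz's theorem (note $A(2x)/A(x)\to 1$) a bounded number $\lceil \log_2 c_2\rceil$ of times gives $A(c_2 x)/A(x)\to 1$, and the case $c<1$ follows by replacing $x$ with $x/c$. Uniformity over $c_1<c<c_2$ is then immediate from the monotone sandwiching.

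For \eq{aa}, I would use partial summation. Write $S(x) = \sum_{a\in A, a\le x} a$. Then
\[ S(x) = \sum_{a\in A, a\le x} a = \int_{0}^{x} A(x)\,dt - \int_0^x A(t)\,dt \le x A(x), \]
which is the trivial bound; to gain the $o(\cdot)$ we split the sum at $x/\omega(x)$ for a slowly growing $\omega$. The tail $\sum_{x/\omega < a \le x} a \le x\bigl(A(x) - A(x/\omega)\bigr)$, and \eq{smallbiga} applied with $c = 1/\omega(x)$ (valid once $\omega$ grows slowly enough, since the convergence in \eq{smallbiga} is uniform on any fixed range and one can diagonalize) shows $A(x/\omega(x)) = (1-o(1))A(x)$, so the tail is $o(xA(x))$. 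The head $\sum_{a\le x/\omega} a \le (x/\omega(x))\cdot A(x/\omega(x)) \le (x/\omega(x)) A(x) = o(xA(x))$. Adding the two pieces gives \eq{aa}.

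The main obstacle is the diagonalization in the last step: \eq{smallbiga} is stated with uniformity only on fixed ranges $c_1<c<c_2$, whereas I want to apply it with $c=c(x)=1/\omega(x)\to 0$. This is handled in the standard way --- choose $\omega$ tending to infinity slowly enough that $A(x/\omega(x))/A(x)\to 1$ still holds, which is possible precisely because $A(x/k)/A(x)\to 1$ for each fixed integer $k$ (a consequence of the iterated doubling relation). Everything else is soft: the reduction of \eq{smallbigb} to \eq{smallbiga} is pure bookkeeping, and \eq{smallbiga} itself is just finitely many applications of Narkiewicz's $A(2x)/A(x)\to 1$. I do not expect to need Lemma~\ref{deltaszigma} here.
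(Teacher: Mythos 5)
Your proposal is correct and follows essentially the same route as the paper: \eqref{smallbiga} by iterating the doubling relation \eqref{smallbig} plus monotonicity, \eqref{smallbigb} by $B(x)\sim x/A(x)$ from \eqref{hyp} (with monotonicity of $B$ handling $c$ near $0$), and \eqref{aa} by splitting the sum at a cut point and applying \eqref{smallbiga}. The only differences are cosmetic: the appeal to \eqref{smallbig2} is superfluous since \eqref{hyp} alone gives $A(x)B(x)\sim x$, and your diagonalized cut at $x/\omega(x)$ is an unneeded complication --- splitting at a fixed $\varepsilon x$ and letting $\varepsilon\to 0$ at the end, as the paper does, already yields \eqref{aa}.
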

\begin{proof}
  For $c=2^k$ with a (positive or negative) integer $k$ the claim \eq{smallbiga} follows from an iterated application of
\eq{smallbig}. For general $c$ the claim for $A$ follows from the monotonicity of $A(x)$. For $B$ from \eq{hyp} we get
\eq{smallbigb} for the same range; the range can be extended down to 0 by the monotonicity of $B(x)$.

  To see \eq{aa} note that the sum with $a\leq \varepsilon x$ contributes at most $\varepsilon xA(x)$, and the sum with $a>\varepsilon x$ contributes at most
   \[ x \bigl(A(x)-A(\varepsilon x)\bigr) =o(xA(x))  \]
by \eq{smallbiga}.
\end{proof}

\begin{proof}[Proof of the Theorem.]
  Fix an integer $x$ and put $U=A\cap[1,x]$,  $V=B\cap[1,x]$. We use the notations $\sigma, \delta$ as in Lemma \ref{deltaszigma}.
We have
   \[ A(x)B(x)-x = |U| |V| -x = y + z - r,\]
   where
    \[ y =  \sum_{\sigma(n)>1} (\sigma(n)-1) \]
    counts the excess multiplicities,
     \[ z = \#\{n: n>x, n\in U+V \} \]
     counts the unnecessarily large sums, and $r=r(x)$ is the number of integers not in $A+B$.

     Let $t=a^*(x)$. Adding $t$ to any $b\in B, b>x-t$ we get a sum $>x$, so
      \[ z \geq B(x)-B(x-t) .\]
If $t \geq x/2$, we use only this and \eq{smallbig} with $c=(x-t)/x$ to conclude
 \[ z \geq \left(1- \frac{x-t}{x}  -o(1) \right)  B(x) \sim \frac{t}{x} B(x) \sim  \frac{t}{A(x)}.\]
(This argument works for $t>cx$ with any fixed $c>0$, but fails for very small $t$, which is the typical situation.)

Assume now $t<x/2$. We are going to estimate $y$. Put $V'=B\cap[1, x-t]$. We will consider the sets $V'+U$, $V'-U$, and
use $\sigma', \delta'$ to denote the corresponding representation functions. 

We have
 \[ \sum \delta'(n) = |U| |V'| = A(x) B(x-t) .\]
 As $U \subset [1,t]$ and $V'\subset[1, x-t]$, we have $V'-U \subset [1-t, x-t-1]$. We show that few sums lie in $[1-t, t]$.
Indeed, if $b-a \leq t$ with $a\in U, b\in V'$, then $b \leq a+t$, so for an $a\in A$ there are at most $B(a+t)$ possible choices of $b$,
This gives altogether
  \[ \sum_{a\in U} B(a+t) < (1+o(1))  \sum_{a\in U} \frac{a+t}{A(a+t)} \]
  by \eq{hyp}. As $A(a+t)=A(t)=A(x)= |U|$ in this range, the sum is equal to
   \[ t + \frac{1}{|U|}  \sum_{a\in U} a = (1+o(1)) t \]
by Lemma \ref{aatlag}. Hence
  \[ \sum_{a\in U} B(a+t) < (1+\varepsilon) t . \]

This means that at least $ A(x) B(x-t) - (1+\varepsilon) t$ pairs give a difference in the interval $[t+1, x-t-1]$, which contains
less than $x-2t$ integers. Consequently
 \[   \sum_{\delta'n)>1} (\delta'(n)-1)  > \bigl(A(x) B(x-t) - (1+\varepsilon) t \bigr) -(x-2t) =A(x) B(x-t) -x + (1-\varepsilon) t. \]
 We now apply Lemma \ref{deltaszigma} to the sets  $U, V'$ to conclude
  \[   \sum_{\sigma'(n)>1} (\sigma'(n)-1) \geq \frac{1}{|U|} \bigl( A(x) B(x-t) -x + (1-\varepsilon) t \bigr) = B(x-t)- \frac{x-(1-\varepsilon)t}{A(x)} . \]
Clearly $\sigma(n) \geq \sigma'(n)$ for all $n$, so 
   \[ y =  \sum_{\sigma(n)>1} (\sigma(n)-1) \geq   \sum_{\sigma'(n)>1} (\sigma'(n)-1). \]
   Adding the estimates we obtain
    \[ A(x)B(x)-x+r=y+z \geq B(x)- \frac{x-(1-\varepsilon)t}{A(x)} = \frac{A(x)B(x)-x}{A(x)} +\frac{(1-\varepsilon)t}{A(x)},\]
which can be rearranged as
\[ A(x)B(x)-x\geq  \frac{(1-\varepsilon)t}{A(x)-1} -  \frac{rA(x)}{A(x)-1}  .  \]

\end{proof}

\section{The construction}

We prove Theorem \ref{pelda}.

Take an increasing sequence $p_1, p_2, \ldots$ of primes such that $k^3 < p_k < (k+1)^3$, possibly with finitely many exceptions.
We shall construct a sequence of integers $u_k$ such that $u_k > ku_{k-1}$, $p_k|u_k$ and finite sets $A_i$ of integers such that
 \[ A_1 = \{ 1, 2, \ldots, p_1\}, \ A_k \subset (u_k, 2u_k)   \text{ for } k \geq 2,   \]
 \[ |A_1| = p_1, \ |A_k| = p_k - p_{k-1} \text{ for } k \geq 2, \]
 hence
  \[ |A_1 \cup A_2 \cup \ldots \cup A_k | = p_k , \]
  and the set $A_1 \cup A_2 \cup \ldots \cup A_k$ is a complete set of residues modulo $p_k$. One of the complements will be
   \[ A = \bigcup_{k=1}^\infty A_k . \]

   To specify the other set we put
    \[ B_k = \{ n: p_k | n, \ ku_k < n < (k+3) u_{k+1}  \} \]
and
 \[ B = \bigcup_{k=1}^\infty B_k . \]

First we prove that such sets  $A_k$
exist, provided the sequence $u_k$ increases sufficiently fast.

\begin{Lemma}
  There are integers $v_k$, depending only on the primes $p_j$, such that sets $A_k$ with the above described properties can
be found whenever $u_k > v_k $ for all $k$.
\end{Lemma}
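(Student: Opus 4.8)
The plan is to construct the sets $A_k$ one prime-step at a time, using a greedy / pigeonhole argument on residues modulo $p_k$. Recall that we need $A_k \subset (u_k, 2u_k)$ with $|A_k| = p_k - p_{k-1}$, and we need $A_1 \cup \dots \cup A_k$ to be a complete residue system mod $p_k$. Since $A_1 \cup \dots \cup A_{k-1}$ already contains $p_{k-1}$ integers forming a complete residue system mod $p_{k-1}$, the obstruction is purely about the new prime $p_k$: we must choose $p_k - p_{k-1}$ integers in the interval $(u_k, 2u_k)$ so that, together with the residues mod $p_k$ already hit by $A_1 \cup \dots \cup A_{k-1}$, every residue class mod $p_k$ is covered. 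First I would observe that the number of residues mod $p_k$ \emph{not} hit by $A_1 \cup \dots \cup A_{k-1}$ is exactly $p_k - p_{k-1}$ (since that union has $p_{k-1}$ elements, and even if they were all distinct mod $p_k$ that leaves $p_k - p_{k-1}$ residues missing; in fact we should arrange, or simply check, that the $p_{k-1}$ old elements are pairwise distinct mod $p_k$ — this can be forced by the growth of $u_k$, or one proves the weaker statement that at most $p_{k-1}$ residues are covered so at least $p_k - p_{k-1}$ are missing, and we then need exactly that many new elements each in a distinct missing class).

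The key step is then: given a residue $\rho \bmod p_k$, is there an integer in $(u_k, 2u_k)$ congruent to $\rho$? Yes, provided the interval has length $> p_k$, i.e. $u_k > p_k$; and then there are roughly $u_k/p_k$ such integers, so there is plenty of room to pick one integer in each of the $p_k - p_{k-1}$ missing residue classes, all distinct. This forces the condition $u_k > v_k$ for a suitable $v_k$ depending only on $p_j$ (say $v_k = p_k$ suffices for this sub-step, though later constraints in the construction — $u_k > k u_{k-1}$, $p_k \mid u_k$ — are imposed separately when the $u_k$ are actually chosen, and are not part of this lemma). So the argument is: set $v_k := p_k$ (or any convenient bound ensuring $2u_k - u_k > p_k$); then for $u_k > v_k$ the interval $(u_k, 2u_k)$ contains a full residue system mod $p_k$, indeed each class at least twice, so we may select, for each of the $p_k - p_{k-1}$ residues mod $p_k$ missing from $A_1\cup\dots\cup A_{k-1}$, a representative in $(u_k,2u_k)$, with all representatives distinct. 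Declaring $A_k$ to be this set of representatives gives $|A_k| = p_k - p_{k-1}$ and makes $A_1 \cup \dots \cup A_k$ a complete residue system mod $p_k$, and by construction $A_k \subset (u_k, 2u_k)$. The base case $k=1$ is immediate from $A_1 = \{1, \dots, p_1\}$.

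I would write this as an induction on $k$: the inductive hypothesis is that $A_1, \dots, A_{k-1}$ have been chosen with the stated cardinalities and with $A_1 \cup \dots \cup A_{k-1}$ a complete residue system mod $p_{k-1}$ (hence of size exactly $p_{k-1}$, hence hitting at most $p_{k-1}$ residues mod $p_k$ and missing at least $p_k - p_{k-1}$ of them). One should be slightly careful that the \emph{number} of missing residues mod $p_k$ is \emph{exactly} $p_k - p_{k-1}$ — this needs the old elements to be pairwise incongruent mod $p_k$. If one wants to avoid imposing this, the cleaner route is: the old union has $p_{k-1}$ elements, so it covers at most $p_{k-1}$ residues mod $p_k$, so at least $p_k - p_{k-1}$ are missing; pick $p_k - p_{k-1}$ of the missing ones and one interval-representative for each; then $A_1 \cup \dots \cup A_k$ has size $\le p_k$ but covers all $p_k$ residues, so it covers them exactly once each, i.e. it is a complete residue system mod $p_k$ and $|A_1 \cup \dots \cup A_k| = p_k$, which retroactively shows the old elements were distinct mod $p_k$ after all. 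The main obstacle — really the only point requiring thought — is this bookkeeping about exactly how many residues are missing; once the interval is long enough to contain every residue class mod $p_k$, the selection itself is trivial, so the whole content of the lemma is the elementary length condition $u_k > v_k := p_k$.
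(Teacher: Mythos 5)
There is a genuine gap, and it sits exactly where the real work of the lemma lies. You treat the obstruction at step $k$ as being ``purely about the new prime $p_k$,'' but the required properties demand that $A_1\cup\dots\cup A_j$ be a complete residue system modulo $p_j$ for \emph{every} $j$, and this union has exactly $p_j$ elements. Hence every element chosen at step $k$ must be incongruent to every other chosen element modulo \emph{all later primes} $p_j$, $j>k$, not just modulo $p_k$. Your inductive hypothesis (completeness mod $p_{k-1}$ only) does not propagate this, and your fallback ``retroactive'' argument is circular: if the old $p_{k-1}$ elements happen to cover only $m<p_{k-1}$ classes mod $p_k$, then after adjoining $p_k-p_{k-1}$ new elements the union covers at most $m+(p_k-p_{k-1})<p_k$ classes, so it is \emph{not} a complete residue system, and the claim ``it covers all $p_k$ residues, hence the old elements were distinct after all'' has no basis. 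Nor is distinctness mod $p_j$ ``forced by the growth of $u_k$'': nothing prevents an element of $A_2\subset(u_2,2u_2)$ and an element of $A_3\subset(u_3,2u_3)$ from being congruent modulo, say, $p_7$, unless one actively avoids it when choosing them.

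The paper's proof is built around precisely this point. The invariant carried through the induction is that the elements of $A_1\cup\dots\cup A_k$ are pairwise incongruent modulo $p_j$ for \emph{every} $j\ge k$, and each new element $a_t$ is chosen in $(u_k,2u_k)$ avoiding the (at most $p_k-1$) forbidden classes modulo each of infinitely many primes. This is handled by a sieve-type count: for the finitely many primes $p_k,\dots,p_r$ a product bound gives at least $\delta u_k-q$ admissible integers (with $q=p_k\cdots p_r$); for $r<j$ with $p_j<2u_k$ the exclusions total less than $(\delta/2)u_k$ because $\sum 1/p_i$ converges (here the growth $p_i\asymp i^3$ is used); and primes $p_j>2u_k$ exclude nothing new. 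That is what produces the threshold $v_k=2q/\delta$, which depends on the whole tail of the prime sequence and is vastly larger than your proposed $v_k=p_k$. Your argument, as written, would let later steps fail: the lemma's content is not the elementary observation that an interval of length $>p_k$ meets every class mod $p_k$, but the simultaneous avoidance of residue classes modulo all future moduli.
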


\begin{proof}
Write
 \[ \delta = \prod_{i=k}^\infty \left(1- \frac{p_k-1}{p_j} \right) \]
 and choose $r$ so that
  \[ \sum_{i=r+1}^\infty \frac{1}{p_i} < \frac{\delta}{4p_k} . \]
The positivity of $\delta$ and the existence of $r$ follows from the convergence of the series $\sum 1/p_i$.
Write $q=p_k p_{k+1} \ldots p_r$. We show that  suitable sets can be found if $u_k > v_k = 2q/\delta$. 

We will construct the sets $A_k$ recursively. Given $A_1, \ldots, A_{k-1}$, a necessary condition for the existence of $A_k$
is that the elements of  $A_1 \cup A_2 \cup \ldots \cup A_{k-1}$ be all incongruent modulo $p_k$. Hence the property which we shall preserve
during the induction is:

``the elements of  $A_1 \cup A_2 \cup \ldots \cup A_k$ are all incongruent modulo $p_j$ for every $j \geq k$.''
We assume this holds for $k-1$ and we build $A_k = \{ a_1, a_2, \ldots, a_{ p_k - p_{k-1}}  \} $.

 Suppose 
$a_1, \ldots, a_{t-1}$ are already found. We want to find $a_t$ so that $m=p_k - p_{k-1} +t-1$ residue classes are
forbidden for each  $p_j$, $j \geq k$. In each interval of length $q$ there are
 \[ q  \prod_{i=k}^r \left(1- \frac{m}{p_j} \right)  > \delta q \]
integers which avoid the $m$ forbidden residue classes modulo all  $p_j$, $k \leq j \leq r$. In the interval $(u_k, 2u_k)$
this means at least $\delta u_k-q$ candidates.

Next we count the numbers in forbidden residue classes modulo  $p_j$, $j>r$.
The number of integers in a residue class $ a \pmod p$ in the interval $(u_k, 2u_k)$ is exactly
 \[ \left[ \frac{2u_k-a-1}{p}\right] - \left[ \frac{u_k-a}{p}\right] \leq \frac{2u_k}{p}, \]
 assuming that $p<2u_k$. We use this estimate for $p_j<2u_k$. This excludes less than
  \[ p_k  \sum_{i=r+1}^\infty \frac{2u_k}{p_i} < (\delta/2) u_k \]
integers.

 Finally, if  $p_j>2u_k$, then there are no new excluded integers. Indeed, the only integer satisfying
$ n \kong{a}{p_j}$ with some $a\in A_1 \cup A_2 \cup \ldots \cup A_{k-1} \cup \{ a_1, \ldots, a_{t-1} \} $ is $a$ itself, which was already excluded
(even several times) by previous congruences.

This leaves us  at least $(\delta/2) u_k-q$ integers to choose from, which is positive if $u_k > 2q/\delta$. 
\end{proof}

Now we show that $A, B$ are additive complements, then estimate $A(x)B(x)-x$.

To prove the first claim, take an arbitrary $n>3u_1$. It satisfies
 \[ (k+2)u_k < n \leq (k+3) u_{k+1} \]
 with some $k$. Select $a\in A$ so that
  \[ a\in A_1 \cup A_2 \cup \ldots \cup A_k, \ a \kong{n}{p_k} .\]
As $1 \leq a < 2u_k$, the integer $b=n-a$ satisfies $ku_k < b < (k+3) u_{k+1}$ and $p_k|b$, so $b\in B_k$.

Now we estimate $B(x)$ for a typical $x$. This number satisfies $ku_k<x \leq (k+1) u_{k+1}$ for some $k$. All blocks
$B_j, j>k$ lie above $x$. An initial segment of $B_k$ gives
 \[ B_k(x) \leq \frac{x-ku_k}{p_k} \]
 elements. To estimate the contribution of smaller blocks note that
  \[ |B_j| \leq \frac{(j+3) u_{j+1} - ju_j} {p_j},   \]
  hence
  \[ B(x) \leq  B_k(x) + |B_{k-1}| + |B_{k-2}| + \ldots + |B_1| \]
   \[ \leq \frac{x}{p_k} + \sum_{j=2}^k \left( \frac{j+2}{p_{j-1}} - \frac{j}{p_j} \right) u_j .\]
This estimate is not quite exact, since possibly only a segment of $B_{k-1}$ is contained in our interval, and the sets
$B_j$ are not disjoint; takint these into account would not substantially improve our result.

By our assumption about the rate of growth of the sequence $p_j$ the coefficient of $u_j$ in the above formula is $O(j^{-3})$,
that is,
 \[ B(x)  \leq \frac{x}{p_k} + c_1 \sum_{j=2}^k \frac{u_j}{j^3} < \frac{x}{p_k} + c_2 \frac{u_k}{k^3} \]
by our assumption about the rate of growth of the sequence $u_j$.

Since $A_{k+2}$ consists  already of elements $>u_{k+2} > (k+1) u_{k+1}$, we have $A(x) \leq p_{k+1}$, consequently
 \[ A(x) B(x) - x < \frac{p_{k+1}-p_k}{p_k} x +  c_2 \frac{u_kp_{k+1}}{k^3} = O(x/k) = o(x), \]
which shows that these sets are indeed exact complements.

For $x=u_{k+1}$ we have $A(x)=p_k$ and $u_k<a^*(x) < 2u_k$, so
\[ A(x) B(x) - x < c_2 \frac{u_kp_{k}}{k^3} < c_3 u_k < c_3a^*(x) , \]
and also
 \[  c_3 u_k < \omega(x) , \]
provided the sequence $u_j$ grows so fast that $\omega(u_{k+1}) > u_k$.
These estimates show the bound \eq{felso}. 

\section{Concluding remark}

All known constructions of exact complements use a variant of this approach, namely combining a complete set of residues 
modulo some integers $p_k$ (primes here, other sorts of integers in other papers, depending on the situation) and
multiples of these $p_k$ in an interval. The difficulty is that multiples of $p_k$ are needed for a time after the
appearance of the firts few multiples of $p_{k+1}$, which creates multiply represented sums. I see no way to eliminate or reduce
this effect, nor a way to improve the lower estimate which would then vindicate this overkill.

   \bibliographystyle{amsplain}
     \bibliography{cimek,cikkeim}

   %   \begin{thebibliography}{9}

%      \end{thebibliography}

     \end{document}